\let\origsection=\section \def\section{\@ifstar{\origsection*}{\mysection}} 
\def\mysection{\@startsection{section}{1}\z@{.7\linespacing\@plus\linespacing}{.5\linespacing}{\normalfont\scshape\centering\S}}
\numberwithin{equation}{section}
\numberwithin{figure}{section}
\newtheorem{theorem}{Theorem}
\newcommand{\N}{\mathbb{N}}
\renewcommand{\triangleleft}{\vartriangleleft}
\renewcommand{\leq}{\leqslant}
\renewcommand{\rho}{\varrho}
\renewcommand{\subset}{\subseteq}
\renewcommand{\supset}{\supseteq}
\newcommand{\nottriangleleft}{\not\kern-1pt\mathrel{\triangleleft}}
\begin{document}
\title{A unified existence theorem for normal spanning trees}   

\author{Max Pitz}

\begin{abstract}
We show that a graph $G$ has a normal spanning tree if and only if its vertex set is the union of countably many sets each separated from any subdivided infinite clique in $G$ by a finite set of vertices. % (depending on the choice of the subdivided clique). 
This proves a conjecture by Brochet and Diestel from 1994, giving a common strengthening of two classical normal spanning tree criterions due to Jung and Halin.
 
Moreover, our method gives a new, algorithmic proof of Halin's theorem that every connected graph not containing a subdivision of a countable clique has a normal spanning tree.
\end{abstract}

\maketitle

\section{Overview}

A rooted spanning tree $T$ of a graph $G$ is called \emph{normal} if the end vertices of any edge of $G$ are comparable in the natural tree order of $T$. Intuitively, all the edges of $G$ run `parallel' to branches of $T$, but never `across'. Since their introduction by Jung in 1969, normal spanning trees have developed to be perhaps the single most useful structural tool in infinite graph theory. 

Every countable connected graph has a normal spanning tree, but uncountable graphs might not, as demonstrated by complete graphs on uncountably many vertices. We have the following characterisation for the existence of normal spanning tree due to Jung \cite{jung1969wurzelbaume}. Here, a set of vertices $U$ is \emph{dispersed} in $G$ if every ray in $G$ can be separated from $U$ by a finite set of vertices.

\begin{theorem}[Jung, 1969]
\label{thm_jung}
A connected graph has a normal spanning tree if and only if its vertex set is a countable union of dispersed sets.
\end{theorem}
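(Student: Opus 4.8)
The plan is to prove both implications, treating the forward direction (existence of a normal spanning tree forces a dispersed decomposition) as the routine one and the converse as the heart of the matter. Throughout I write $\lceil t\rceil$ and $\lfloor t\rfloor$ for the down- and up-closure of a vertex $t$ in the tree order of a normal tree.

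For the forward implication, suppose $T$ is a normal spanning tree of $G$ with root $r$. Since $T$ is a spanning tree, the $T$-path from $r$ to any vertex is finite, so every vertex has a finite height and the level sets $V_0,V_1,V_2,\dots$ form a countable partition of $V(G)$; it thus suffices to show each level $V_n$ is dispersed. I would first record two standard consequences of normality: for every $t$ the up-closure $\lfloor t\rfloor$ is a component of $G$ minus the \emph{finite} set $\lceil t\rceil\setminus\{t\}$, and any two incomparable vertices $x,y$ are separated in $G$ by $\lceil x\rceil\cap\lceil y\rceil$. Given a ray $R=v_0v_1\cdots$, consider the set $C^\ast$ of vertices lying below infinitely many $v_i$. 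Incomparable members of $C^\ast$ would force $R$ to cross a finite separator infinitely often, so $C^\ast$ is a chain; and a highest element $c$ of $C^\ast$ would pull a tail of $R$ into $\lfloor c\rfloor$ while forcing it across the finite set $\lceil c\rceil$ between infinitely many distinct child-subtrees, again impossible. Hence $C^\ast$ is an infinite branch $B$, a tail of $R$ lies in $\lfloor b_n\rfloor$ for the level-$n$ vertex $b_n$ of $B$, and since $b_n$ is the only vertex of $V_n$ inside $\lfloor b_n\rfloor$, the finite set $\lceil b_n\rceil$ together with the finitely many vertices of $R$ preceding this tail separates $R$ from $V_n$. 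Thus each $V_n$ is dispersed.

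For the converse — the substantive direction — assume $V(G)=\bigcup_n U_n$ with each $U_n$ dispersed. Replacing $U_n$ by $U_0\cup\dots\cup U_n$ (a finite union of dispersed sets is again dispersed) I may assume the $U_n$ increase, and I put the intended root into $U_0$. The strategy is to build the normal spanning tree by a transfinite recursion, growing an increasing family of normal trees $(T_\alpha)$ one finite path at a time and taking unions at limit stages, driven so that by stage $n$ every vertex of $U_n$ has been absorbed. The structural fact enabling growth is that for any normal tree $T$ the neighbourhood $N(C)$ of every component $C$ of $G-V(T)$ is a chain: if $x,y\in N(C)$ were incomparable, a path through $C$ between their $C$-neighbours would join $x$ to $y$ while avoiding $\lceil x\rceil\cap\lceil y\rceil$, contradicting separation. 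When this chain has a maximum $t$, I may attach new descendants of $t$ reaching into $C$ and preserve normality, since all of $N(C)$ then lies below the new vertices; connectivity of $G$ guarantees that an uncovered vertex is always reachable, so the recursion makes genuine progress.

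The crux, and the place where dispersedness is indispensable, is to keep all these neighbourhood chains finite — equivalently, to prevent a component from becoming attached along an entire branch with no top, where no legal attachment point exists. I would therefore carry the invariant that every component of $G-V(T_\alpha)$ has finite neighbourhood, and the main obstacle is the behaviour at limit stages, where the union of the chains $N(C)$ along the recursion could a priori be an infinite chain, i.e.\ a branch $B$ of the limit tree to which a complementary component $C$ clings at infinitely many vertices. Such a configuration produces the ray $B$ together with infinitely many disjoint $C$–$B$ paths, so the vertices of $C$ feeding these attachments cannot be separated from the ray $B$ by any finite set — which is exactly what dispersedness of the sets $U_n$ forbids, since each $U_n$ can be separated from the ray $B$ by a finite set. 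Organising the recursion so that each dispersed set is exhausted before its limit can manufacture an infinite neighbourhood, and making this separation bound quantitatively control the attachment, is the technical heart of the argument. Once it is in place, the union of the $T_\alpha$ is a normal spanning tree, because normality is preserved under increasing unions and every vertex has been covered.
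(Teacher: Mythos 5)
Your forward direction is fine (and more detailed than the paper, which simply recalls that the levels of a normal spanning tree are dispersed). The converse, however, has a genuine gap, and it sits exactly where you place "the technical heart of the argument" without supplying it. Worse, the contradiction you sketch for the limit stages is a non-sequitur: you argue that if a component $C$ clings to a branch $B$ at infinitely many vertices, then \emph{the attaching vertices of $C$} cannot be finitely separated from $B$, and that this "is exactly what dispersedness of the sets $U_n$ forbids." It is not. Dispersedness constrains each $U_n$ (and, since dispersedness is preserved under finite unions, each finite union of them), but the attaching vertices of $C$ may be scattered over infinitely many of the $U_n$; indeed $V(G)$ itself can never be finitely separated from any ray of $G$, and yet it is a countable union of dispersed sets whenever $G$ has a normal spanning tree --- that is the whole point of the theorem. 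So exhibiting \emph{some} set of vertices inseparable from $B$ contradicts nothing; you must exhibit such a set inside a \emph{fixed finite union} $U_0\cup\dots\cup U_m$, and nothing in your recursion ("grow one finite path at a time, absorb $U_n$ by stage $n$") forces the attachments to come from such a bounded collection. The plan to "exhaust each dispersed set before its limit can manufacture an infinite neighbourhood" is precisely the missing mechanism, and it is not clear it can even be carried out as stated, since keeping every complementary component's neighbourhood finite while absorbing an entire infinite set $U_n$ is itself the problem being solved.

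For contrast, the paper's construction (of which its Jung proof is the special case obtained by "skipping the steps of choosing $Y_D$ and of verifying property (1)") resolves exactly this issue with a greedy bookkeeping device and needs only $\omega$ steps, no transfinite recursion: one builds a chain $T_0\subset T_1\subset\cdots$ of \emph{rayless} normal trees (raylessness makes every neighbourhood chain $N(D)$ automatically finite, so your invariant comes for free), and when extending into a component $D$ of $G-T_n$ one deliberately reaches a vertex $v_D\in V_{n_D}\cap D$ where $n_D$ is \emph{minimal} with $V_{n_D}\cap D\neq\emptyset$. If the union $T$ then fails to span, a leftover component $C$ has an infinite chain $N(C)$ along a ray $R\subset T$, and the minimality of the indices $n_D$ for the components $D\supset C$ guarantees that vertices of the single dispersed set $U=\bigcup\{V_n:n\leq n_C\}$ were attached to $T$ inside $D$, hence (via the connectedness of $T_{n+1}\cap D$) joined to $R$ above any prescribed edge of $R$. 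That pins the inseparable set inside a fixed finite union and yields the contradiction your sketch cannot reach. You would need to either import this minimal-index selection rule into your recursion, or find a different device with the same effect, before your converse direction is a proof.
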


However, Jung's condition can be hard to verify, and the most useful \emph{sufficient} condition in practice giving a normal spanning tree, see e.g.\ \cite{diestel1994depth,diestel1996classification,diestel1992proof}, is the following criterion due to Halin~\cite{halin1978simplicial}.

\begin{theorem}[Halin, 1978]
\label{thm_halin}
Every connected graph not containing a subdivision of a countable clique admits a normal spanning tree.
\end{theorem}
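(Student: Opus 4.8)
The plan is to reduce the statement to Jung's criterion (Theorem~\ref{thm_jung}) and argue by contraposition. Since $G$ is connected, Theorem~\ref{thm_jung} says that $G$ has a normal spanning tree precisely when $V(G)$ is a countable union of dispersed sets; so I would assume that $V(G)$ is \emph{not} such a union and manufacture a subdivision of the countable clique $K_{\aleph_0}$ inside $G$, contradicting the hypothesis. The whole difficulty is thus to convert a failure of the dispersed decomposition into the highly symmetric, ``complete'' connection pattern that a subdivided $K_{\aleph_0}$ demands.

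The engine I would use is the interplay between inseparability and disjoint paths supplied by Menger's theorem (in its star--comb form): if a vertex set $U$ is not dispersed, then some ray $R$ cannot be separated from $U$ by any finite set, and Menger then yields infinitely many disjoint $U$--$R$ paths. I would build the subdivision greedily, constructing branch vertices $b_1, b_2, \dots$ together with internally disjoint paths $P_{ij}$ joining each pair, while carrying along a connected ``reservoir'' that is linked to all branch vertices built so far by disjoint paths and is itself still richly connected. At stage $n$ the reservoir provides, via Menger, the fresh branch vertex $b_{n+1}$ and the $n$ new paths joining it to $b_1,\dots,b_n$, after which the reservoir is refined for the next step.

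The main obstacle — and the reason the hypothesis must be ``not a \emph{countable} union of dispersed sets'' rather than merely ``not dispersed'' — is keeping everything disjoint across the entire infinite process while simultaneously guaranteeing that the reservoir remains inseparable from an end after countably many paths have been committed. Each step consumes finitely many vertices, and the whole construction countably many; a reservoir that merely failed to be dispersed could be exhausted or cut off after countably many commitments, whereas the failure of the \emph{countable} dispersed decomposition is exactly what lets me peel off the committed (dispersed) layers and still find an inseparable core to continue. Making this bookkeeping precise — a pigeonhole/diagonalisation ensuring that the paths for all pairs avoid one another and that inseparability persists to the limit — is where I expect the real work to lie.

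Finally, I would note the alternative, constructive route matching the ``algorithmic'' claim: build a normal tree directly by a transfinite depth-first search, taking unions at limit stages (which remain normal trees, since comparabilities are witnessed at finite stages). A depth-first tree has only ``back edges'', so normality is automatic; the single remaining issue is exhaustion, i.e.\ that the search cannot halt at a non-spanning maximal normal tree. A component left behind has neighbourhood an infinite chain or a non-chain in the tree, and I would aim to extract a $K_{\aleph_0}$-subdivision from such a persistent obstruction — the same disjoint-paths difficulty as above, now read off from the tree structure.
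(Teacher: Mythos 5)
Both routes you sketch are missing the same core idea, and that idea is precisely the content of Halin's theorem. In your first route, the reduction is set up correctly (you must show: if $V(G)$ is not a countable union of dispersed sets, then $G$ contains a subdivided $K^{\aleph_0}$), but your Menger/star--comb engine never gets traction. Observe that ``$V(G)$ is not dispersed'' says nothing beyond ``$G$ contains a ray'': any separator of a ray $R$ from $U = V(G)$ must contain all of $V(R)$, since each vertex of $R$ is itself a trivial $U$--$R$ path. The negation of Jung's condition is a global statement about \emph{all} countable decompositions; it does not hand you any specific non-dispersed set $U$, ray $R$, and family of disjoint $U$--$R$ paths to start from. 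The standard --- essentially the only known --- way to produce a $TK^{\aleph_0}$ is to exhibit infinitely many vertices dominating a common ray (cf.\ \cite[Exercise~8.30]{Bible}, which is what the paper invokes), and your sketch contains no mechanism producing even two suitably joined branch vertices, let alone the ``reservoir'' maintenance and diagonalisation that you explicitly defer as ``where the real work lies''. That deferred work is the theorem.

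Your second route is not merely incomplete but fails as stated: a maximal non-spanning normal tree does \emph{not} yield a $TK^{\aleph_0}$. Take the infinite ladder, with vertices $v_i, c_i$ $(i \in \N)$ and edges $v_iv_{i+1}$, $c_ic_{i+1}$, $v_ic_i$. The ray $R = v_0v_1v_2\cdots$ rooted at $v_0$ is a normal tree, and it is maximal: a first new vertex $c_j$ would have to lie above $v_j$ because of the chord $v_jc_j$, and then the chords $v_ic_i$ and paths such as $c_jc_{j+1}v_{j+1}$ force a $T$-path with incomparable ends. Yet the ladder is planar, hence contains no $TK^{\aleph_0}$ --- and of course it does have a normal spanning tree. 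So maximality alone, whether reached by transfinite depth-first search or Zorn's lemma, is too weak an invariant. This is exactly why the paper's algorithm does more than ``extend until stuck'': at each of its $\omega$ steps it extends $T_n$ into \emph{every} component $D$ of $G - T_n$ so as to cover a neighbour $y_x$ of \emph{each} $x \in N(D)$. It is this covering invariant, not maximality, that yields property (1) of the paper's proof --- every vertex of $N(C)$ of a surviving component $C$ has neighbours arbitrarily high on the ray $R$, via the connected sets $T_{n+1} \cap D$, and hence dominates $R$ --- and domination by infinitely many vertices is what permits the $TK^{\aleph_0}$ extraction. (Two minor points: for a normal tree the neighbourhood of a component of $G-T$ is always a chain, so your ``or a non-chain'' alternative cannot occur; and the paper's proof of Halin's theorem does not pass through Jung's criterion at all, but runs the greedy construction directly.)
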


In 1994, Brochet and Diestel \cite[\S10, Problem~4]{brochet1994normal} proposed a common extension of Jung's and Halin's normal spanning tree criterions. The main result in this note, Theorem~\ref{thm_max}, confirms their conjecture. %Note that Theorem~\ref{thm_max} implies both Theorems~\ref{thm_jung} and \ref{thm_halin}.
Call a set $U$ of vertices \emph{$TK^{\aleph_0}$-dispersed in $G$} if every subdivided infinite clique in $G$ can be separated from $U$ by a finite set of vertices.

\begin{theorem}
\label{thm_max}
A connected graph has a normal spanning tree if and only if its vertex set is a countable union of $TK^{\aleph_0}$-dispersed sets.
%For a connected graph $G$, the following are equivalent:
%\begin{enumerate}
%\item $G$ has a normal spanning tree,
%\item $V(G)$ is a countable union of dispersed sets,
%\item $V(G)$ is a countable union of $TK^{\aleph_0}$-dispersed sets.
%\end{enumerate}
\end{theorem}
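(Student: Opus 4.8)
The plan is to prove the two implications separately, deducing necessity from Jung's theorem and reserving the real work for sufficiency, where the recursion I build will simultaneously yield the algorithmic proof of Halin's theorem. \emph{For necessity}, suppose $G$ has a normal spanning tree. By Theorem~\ref{thm_jung} I may write $V(G)=\bigcup_{n\in\N}U_n$ with each $U_n$ dispersed, so it is enough to verify that every dispersed set $U$ is $TK^{\aleph_0}$-dispersed. Let $K$ be a subdivided infinite clique with branch set $B$. Any finite $S$ meets only finitely many of the subdivided edges of $K$, so the branch vertices outside $S$ stay pairwise linked through intact subdivided edges; hence $K-S$ has a single component meeting all but finitely many vertices of $K$, and in particular $K$ has only one end. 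Picking any ray $R\subseteq K$, which runs through infinitely many branch vertices, a finite $S$ separating $R$ from $U$ places this large component of $K-S$ inside the $U$-free component of $G-S$ containing the tail of $R$; adjoining to $S$ the finitely many remaining vertices of $K$ gives a finite set separating $U$ from all of $K$. The same combination-of-separators argument shows that a finite union of $TK^{\aleph_0}$-dispersed sets is $TK^{\aleph_0}$-dispersed, which I record for the converse.

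\emph{For sufficiency}, assume $V(G)=\bigcup_n U_n$ with each $U_n$ being $TK^{\aleph_0}$-dispersed; by the last remark I may take the $U_n$ increasing. I will construct a normal spanning tree as the union of an increasing, transfinite chain of normal trees. The engine is the standard extension principle: for a normal tree $T\subseteq G$ and a component $C$ of $G-V(T)$, the neighbourhood $N_G(C)$ is a chain in $T$, so when that chain has a maximum $t$ one may adopt a neighbour of $t$ in $C$ as a new child and stay normal. At successor steps the recursion extends the current tree one vertex toward the least so-far-uncovered vertex, in a fixed well-order refining the filtration $(U_n)$, and at limits it takes unions, which remain normal trees. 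The construction fails to deliver a spanning tree only if some vertex $v$ is never placed at finite height, that is, $v$ stays forever inside components whose attachment to the growing tree recedes to infinity.

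\emph{The crux, and the link to Halin.} The heart of the proof is a lemma asserting that such a stranding of $v$ cannot occur. The idea is that if $v$ resists placement, then across the stages of the recursion there are infinitely many tree-vertices through which $v$ stays connected back to an unbounded ascending chain, and a fan/Menger argument should weave these back-connections, routed through the persistent components containing $v$, into infinitely many pairwise internally disjoint paths among a common infinite branch set --- an honest subdivision of $K^{\aleph_0}$ from which no finite set separates $v$. As $v\in U_n$ for some $n$ and $U_n$ is $TK^{\aleph_0}$-dispersed, this is impossible, so no vertex is stranded and the recursion exhausts $V(G)$. When $G$ contains no subdivided infinite clique at all the lemma applies vacuously and the recursion simply never stalls, which is exactly the promised algorithmic proof of Theorem~\ref{thm_halin}. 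I expect this extraction lemma to be the main obstacle: one must separate the harmless comb- or dominating-vertex-like configurations, which a careful choice of attachment and well-order should simply avoid, from the genuinely obstructive ones, and then produce the required pairwise internally disjoint paths uniformly along the whole transfinite construction while dovetailing with the bookkeeping over $(U_n)$. Making this weaving precise, rather than merely plausible, is where the argument will be most delicate.
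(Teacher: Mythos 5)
Your necessity direction is sound: reducing to Jung's Theorem~\ref{thm_jung} and showing that every dispersed set is $TK^{\aleph_0}$-dispersed via the one-endedness of a subdivided clique is correct (the paper treats this implication as immediate), as is the closure of $TK^{\aleph_0}$-dispersedness under finite unions. The problem is the sufficiency direction, which is where the entire content of Theorem~\ref{thm_max} lies: what you give is a plan whose central step --- the ``extraction lemma'' asserting that a stranded vertex forces a subdivided $K^{\aleph_0}$ that no finite set separates from it --- is left unproved, and you say so yourself. Worse, for the recursion you actually describe (transfinite, one vertex at a time, along a fixed well-order refining the filtration) that lemma is false. Take $G$ to be a ray $R = r_0 r_1 r_2 \cdots$ together with one further vertex $v$ joined to every $r_i$, take $U_1 = V(G)$ (vacuously $TK^{\aleph_0}$-dispersed, since $G$ contains no subdivided infinite clique at all), and take a well-order with $r_0 < r_1 < r_2 < \cdots < v$. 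Your rule ``extend one vertex toward the least uncovered vertex'' then builds exactly the ray $R$ in $\omega$ steps, after which the component $C = \{v\}$ has $N(C) = V(R)$, a chain with no maximum, so no normal one-vertex extension into $C$ is possible: $v$ is stranded although there is no $TK^{\aleph_0}$ anywhere. In particular your claim that in $TK^{\aleph_0}$-free graphs ``the recursion simply never stalls'' fails, and the phrase ``a careful choice of attachment and well-order should simply avoid'' the harmless configurations is naming the actual problem, not solving it.

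The paper resolves exactly this issue, and not by a choice of well-order: it abandons the transfinite vertex-by-vertex scheme altogether. It builds a chain $T_0 \subset T_1 \subset \cdots$ of \emph{rayless} normal trees in just $\omega$ steps (raylessness keeps every attachment set $N(D)$ finite, so finite normal extension is always possible and no problematic limit stages arise), and at step $n$ it extends into \emph{every} component $D$ of $G - T_n$ simultaneously, covering two kinds of vertices: one neighbour $y_x \in D$ of \emph{each} attachment vertex $x \in N(D)$, and one vertex $v_D \in V_{n_D} \cap D$ with $n_D$ minimal such that $V_{n_D}$ meets $D$. The first mechanism is what guarantees that, for any leftover component $C$ of $G - \bigcup_n T_n$, every vertex of the infinite chain $N(C)$ dominates the ray containing it --- this is what makes the subdivided clique constructible at all (and in the example above it is precisely this mechanism that rescues $v$: once $r_1$ is in the tree, $v$ is the only remaining neighbour of $r_0$ in the leftover component, so it gets covered at the next step). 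The second mechanism is what guarantees that $\bigcup \{V_n \colon n \leq n_C\}$ cannot be finitely separated from that clique, completing the contradiction. Your proposal contains neither mechanism, and without them the ``weaving'' you hope for cannot be carried out; supplying them is not a technical refinement of your plan but the substance of the proof.
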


Clearly, Theorem~\ref{thm_max} implies both Theorem~\ref{thm_jung} and \ref{thm_halin}. The method of our proof of Theorem~\ref{thm_max} is also new. It consists of a single greedy algorithm, which constructs the desired normal spanning tree in just $\omega$-many steps. It differs in this respect from all the known proofs of Halin's criterion, which have relied on advanced results from structural graph theory and typically used transfinite recursions of order type $\kappa = |G|$: Halin's original proof employing his theory of simplicial decompositions \cite{halin1978simplicial}, Robertson, Seymour \& Thomas's proof using tree decompositions \cite{robertson1992excluding}, and Polat's proof using the topology of the end space \cite{polat1996ends2}. For further details on how these approaches interact, see also \cite{diestel1994depth} and \cite[Theorem~12.6.9]{Bible}.

\section{The proof}

The \emph{tree-order} $\leq_T$ of a tree $T$ with root $r$ is defined by setting $u \leq_T v$ if $u$ lies on the unique path from $r$ to $v$ in $T$. Given a vertex $v$ of $T$, we denote by $T_v := T[\{t \colon v \leq_T t\}]$ the \emph{uptree of $T$ rooted in $v$}.
For rooted trees that are not necessarily spanning, one generalises the notion of normality as follows: A rooted tree $T \subset G$ is \emph{normal (in $G$)} if the end vertices of any $T$-path in $G$ (a path in $G$ with end vertices in $T$ but all edges and inner vertices outside of $T$) are comparable in the tree order of $T$. If $T$ is spanning, this clearly reduces to the definition given in the introduction. Note that if $T \subset G$ is normal, then the set of neighbours $N(D)$ of any component $D$ of $G - T$ forms a chain in $T$, i.e.\ all vertices in $N(D)$ are comparable in $\leq_T$. 

\begin{proof}[Proof of Theorem~\ref{thm_max}]
For the forwards implication, recall that the levels of any normal spanning tree are dispersed, and hence in particular $TK^{\aleph_0}$-dispersed in $G$.

Conversely, let $G$ be a connected graph and let $\{V_n \colon n \in \N\}$ be a collection of $TK^{\aleph_0}$-dispersed sets in $G$ with $V(G) = \bigcup_{n \in \N} V_n$. 
Construct a countable chain $T_0 \subset T_1 \subset T_2 \subset \cdots$ of rayless normal trees in $G$ with the same root $r \in V(G)$ as follows: Put $T_0 = \{r\}$, and suppose $T_n$ has already been defined. Since $T_n$ is a rayless normal tree, any component $D$ of $G - T_n$ has a finite neighbourhood $N(D)$ in $T$. For each $x \in N(D)$ select one neighbour $y_x \in D$ of $x$ and call the resulting set $Y_D = \{y_x \colon x \in N(D) \} \subseteq D$. Also, let $n_D$ be minimal such that $V_{n_D} \cap D \neq \emptyset$ and pick some vertex $v_D \in V_{n_D} \cap D$. Since $G$ is connected, we may extend $T_n$ finitely into every such component $D$ as to cover $Y_D \cup \{v_D\}$ preserving normality (see \cite[Proposition~1.5.6]{Bible}), so that the extension $T_{n+1} \supset T_n$ is a rayless normal tree with root $r$. This completes the construction. 

For later use we observe that normality of $T_{n+1}$ implies that $D \cap T_{n+1}$ is connected for every component $D$ of $G - T_n$ $(\star)$. Indeed, since $T_n \subset T_{n+1}$ is a rooted subtree, it follows that any two vertices $s$ and $s'$ belonging to distinct components $S$ and $S'$ in $D \cap T_{n+1}$ will be incomparable in the tree order of $T_{n+1}$; hence any $T_{n+1}$-path in $D$ from $S$ to $S'$, which exists since $D$ is connected, violates normality of $T_{n+1}$.

Clearly, $T = \bigcup_{n \in \N} T_n$ with root $r$ is a normal tree in $G$. We show that $T$ is spanning unless some $V_n$ was not $TK^{\aleph_0}$-dispersed.

Indeed, if $T$ is not spanning, consider a component $C$ of $G-T$, and let $n_C$ be minimal such that $V_{n_C} \cap C \neq \emptyset$. Then $N(C) \subseteq T$ is a chain in $T$ which must be infinite: if it was finite, then $N(C) \subseteq T_n$ for some $n \in \N$ but then we would have extended $T_n$ into $C$, a contradiction. Hence, $N(C)$ lies on a unique ray $R \subset T$ starting at the root of $T$. 

We claim that 
\begin{enumerate}
\item every vertex $x \in N(C)$ dominates $R$, and that
\item $U := \bigcup \{ V_n \colon n \leq n_C \}$ cannot be separated from $R$ by a finite set of vertices.
\end{enumerate} 
To see this, we show that for arbitrarily high edges $e = uv \in E(R)$ with $x <_T u <_T v$, we have $N(x) \cap T_v \neq \emptyset \neq U \cap T_v$. Let $n \in \N$ be large enough such that $v \in T_n$, and let $D$ be the component of $G-T_n$ in which $R$ has a tail. Note that $C \subset D$ since $C$ has neighbours on every tail of $R$. Since $x \in N(C) \subseteq T$ we also have $x \in N(D) \subseteq T_n$, and hence by construction, there is a neighbour $y_x$ of $x$ in $G$ and a vertex $v_D \in U$ contained in $T_{n+1} \cap D$. Since the latter set is connected by $(\star)$, there exist $y_x{-}R$ and $v_D{-}R$ paths in $T_{n+1} \cap D$, which avoid $e$ and hence witness that $N(x) \cap T_v \neq \emptyset \neq U \cap T_v$.

To obtain our final contradiction, note that using (1) %, all infinitely many vertices in $N(C)$ dominate the ray $R$, and so 
it is straightforward to construct a subdivision $K \subset G$ of a countable clique with branch vertices some subset of $N(C)$, see e.g.\ \cite[Exercise~8.30]{Bible}. By (2), the set $U$ cannot be finitely separated from $K$, i.e.\ $U$ is not $TK^{\aleph_0}$-dispersed. However, the property of being $TK^{\aleph_0}$-dispersed is preserved under finite unions; hence, one of $V_n$ for $n \leq n_C$ fails to be $TK^{\aleph_0}$-dispersed, a contradiction.
\end{proof}

A simple proof for Halin's Theorem~\ref{thm_halin} may be extracted by skipping the selection of $v_D$ and the verification of property (2). Similarly, by skipping the steps of choosing $Y_D$ and of verifying property (1), we obtain a simple proof for  Jung's Theorem~\ref{thm_jung}.

\bibliographystyle{plain}
\bibliography{reference}

\end{document}